\theoremstyle{plain}
\newtheorem{thm}[subsection]{Theorem}
\newtheorem{prop}[subsection]{Proposition}
\newtheorem{cor}[subsection]{Corollary}
\newtheorem{lemma}[subsection]{Mather's Lemma}
\theoremstyle{definition}
\newtheorem{rk}[subsection]{Remark}
\newtheorem{defn}[subsection]{Definition}
\numberwithin{equation}{section} \setcounter{tocdepth}{1}
\newcommand{\R}{\mathbb{R}}
\newcommand{\C}{\mathbb{C}}
\newcommand{\ir}{\mathcal{R}}
\newcommand{\ik}{\mathcal{K}}
\newcommand{\jj}{\mathcal{J}}
\begin{document}

\title[Homogeneous polynomials with isomorphic Milnor algebras]{Homogeneous polynomials with isomorphic Milnor algebras}
\author[Imran Ahmed]{Imran Ahmed}

\bigskip
\medskip
\address{Abdus Salam School of Mathematical Sciences,
         Government College University,
         68-B New Muslim Town Lahore,
         PAKISTAN.}
\address{COMSATS Institute of Information Technology, M.A. Jinnah Campus, Defence Road, off Raiwind Road Lahore,
         PAKISTAN.}
\email {iahmedthegr8@gmail.com}

\subjclass[2000]{Primary 14E05,
32S30, 14L30 ; Secondary 14J17, 16W22.}

\keywords{ Milnor algebra, right-equivalence, homogeneous polynomial}

\maketitle


\section{Introduction}

In this note we recall first Mather's Lemma \ref{lem4.3.1} providing
effective necessary and sufficient conditions for a connected
submanifold to be contained in an orbit. In Theorem \ref{th8.1} we show that two homogeneous
polynomials $f$ and $g$ having isomorphic Milnor algebras are right-equivalent.

This is similar
to the celebrated theorem by Mather and Yau \cite{MY}, saying that
the isolated hypersurface singularities are determined by their
Tjurina algebras. Our result applies only to homogeneous
polynomials, but it is no longer necessary to impose the condition
of having isolated singularities at the origin.

\section{Preliminary results}
We recall here some basic facts on semialgebraic sets , which are
also called constructible sets, especially in the complex case. For
a more complete introduction we can see \cite{GWPL}, chapter 1.

\begin{defn}\label{def4.1.2}
Let $M$ be a smooth algebraic variety over $K$ ($K=\R$ or $\C$ as
usual.)\\
(i) {\bf Complex Case.} A subset $A\subset M$ is called
semialgebraic if $A$ belongs to the Boolean subalgebra generated by
the Zariski closed subsets of $M$ in the Boolean algebra $P(M)$ of
all subsets of $M$.\\\\
(ii) {\bf Real Case}. A subset $A\subset M$ is called semialgebraic
if $A$ belongs to Boolean subalgebra generated by the open sets
$U_f=\{x\in U;\,f(x)>0\}$ where $U\subset M$ is an algebraic open
subset in $M$ and $f:M\to \R$ is an algebraic function, in the
Boolean algebra $P(M)$ of all subsets of $M$.
\end{defn}

By definition, it follows that the class of semialgebraic subsets of
$M$ is closed under finite unions, finite intersections and
complements. If $f:M\to N$ is an algebraic mapping among the smooth
algebraic varieties $M$ and $N$ and if $B\subset N$ is
semialgebraic, then clearly $f^{-1}(B)$ is semialgebraic in $M$.
Conversely, we have the following basic result.

\begin{thm}\label{th4.1.3} (TARSKI-SEIDENBERG-CHEVALLEY)\\
If $A\subset M$ is semialgebraic, then $f(A)\subset N$ is also
semialgebraic.
\end{thm}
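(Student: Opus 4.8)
The plan is to reduce the general assertion to the case of a coordinate projection and then dispatch the two cases by the classical elimination arguments, Chevalley's theorem in the complex case and the Tarski--Seidenberg theorem in the real case. First I would use the graph construction: since $f:M\to N$ is algebraic and $N$ is separated, the graph $\Gamma_f=\{(x,f(x)):x\in M\}\subset M\times N$ is Zariski closed, hence semialgebraic, and if $A\subset M$ is semialgebraic then so is $\Gamma_f\cap(A\times N)$, because the class of semialgebraic sets is stable under preimages by algebraic maps, finite intersections and products. As $f(A)=p_N\bigl(\Gamma_f\cap(A\times N)\bigr)$, where $p_N:M\times N\to N$ is the second projection, it suffices to prove the theorem when $f$ is a projection. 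Covering $M$ and $N$ by affine charts and using that semialgebraicity is local and chart-independent, one reduces to the model situation of a linear projection $K^m\to K^n$, and writing such a projection as a composition reduces further to the single statement: the image of a semialgebraic $A\subset K^{n+1}$ under $\pi(x_1,\dots,x_n,t)=(x_1,\dots,x_n)$ is semialgebraic in $K^n$.

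In the complex case $K=\C$ this is Chevalley's theorem, which I would prove by Noetherian induction on the Zariski closure of $\pi(A)$. Writing the constructible set $A$ as a finite union of locally closed subsets and intersecting with the irreducible components of the relevant varieties, one reduces to $A$ irreducible with closure $X$, and to $\pi|_X:X\to Y:=\overline{\pi(A)}$ dominant between irreducible varieties. The crux is the standard lemma that a dominant morphism of finite type of irreducible varieties has image containing a nonempty Zariski-open subset $U\subseteq Y$; this comes from Noether normalization of the generic fibre (equivalently, from generic flatness). Granting it, $\pi(A)=U\cup\pi\bigl(\pi^{-1}(Y\setminus U)\cap A\bigr)$, and since $Y\setminus U$ is a proper closed subvariety of $Y$ the second term is constructible by the inductive hypothesis; hence so is $\pi(A)$.

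In the real case $K=\R$ this is the Tarski--Seidenberg theorem, and by the reduction above it amounts to eliminating one existential quantifier: if $\Phi(x,t)$ is a finite Boolean combination of conditions $P(x,t)=0$ and $Q(x,t)>0$ with $P,Q$ polynomials, then $\{x\in\R^n:\exists\,t\in\R,\ \Phi(x,t)\}$ is semialgebraic. I would prove this with the theory of subresultants and generalized Sturm sequences: for fixed $x$, the truth of $\exists\,t\,\Phi(x,t)$ is controlled by the number of real roots of the polynomials in $t$ occurring in $\Phi$, their relative order, and the signs of the various $Q(x,\cdot)$ on the intervals cut out by consecutive roots; and every such piece of data is the sign of a polynomial in $x$ (a principal subresultant coefficient, or a leading coefficient of a Sturm remainder, evaluated at $x$). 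There are only finitely many admissible sign patterns of this finite list of polynomials, and collecting those patterns for which $\exists\,t\,\Phi(x,t)$ holds exhibits $\pi(A)$ as a finite Boolean combination of polynomial sign conditions on $x$, i.e. as a semialgebraic set.

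The main obstacle is the real case, and within it the handling of degeneracies: the subresultant and Sturm constructions are uniform in the parameters only while certain leading coefficients stay nonzero, so the argument must be run on a stratification of $\R^n$ according to which of these coefficients vanish, and one must check that this stratification terminates into a genuine finite Boolean combination rather than an infinite regress. Making that bookkeeping effective is precisely the purpose of the subresultant formalism; everything else is formal manipulation with the definition of a semialgebraic set.
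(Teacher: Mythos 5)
The paper does not prove this statement at all: it is quoted as a classical black box, with a pointer to \cite{GWPL}, Chapter~1, and the rest of the paper only uses it through Proposition~\ref{prop4.1.8} and Mather's Lemma. So there is no ``paper's proof'' to match; what you have written is a reconstruction of the standard textbook argument, and as an outline it is correct. Your reduction is the right one: the graph $\Gamma_f$ is Zariski closed, $f(A)=p_N\bigl(\Gamma_f\cap(A\times N)\bigr)$, and after passing to affine charts and factoring the projection into single-coordinate projections everything rests on eliminating one variable. The complex half (Chevalley) is essentially complete as you state it --- Noetherian induction plus the lemma that a dominant morphism of irreducible varieties has image containing a dense open set is exactly how this is done. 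The real half is where the actual content lives, and there your text is a description of a proof rather than a proof: the assertion that the number and ordering of real roots of the $t$-polynomials, and the signs of the $Q(x,\cdot)$ on the intervals between them, are all ``the sign of a polynomial in $x$'' is precisely the Tarski--Seidenberg theorem in disguise, and establishing it uniformly in $x$ (via subresultants, Sturm--Habicht sequences, or Cohen--H\"ormander elimination) is the entire technical burden. You correctly identify the degeneracy/stratification issue and why it terminates, so the skeleton is sound, but a referee would ask you to either carry out the one-variable elimination or cite it (e.g.\ \cite{GWPL} or Bochnak--Coste--Roy) exactly as the paper does. In short: your route is the standard one, more informative than the paper's bare citation, but not self-contained at the one point where self-containment is hard.
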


Next consider the following useful result.

\begin{prop}\label{prop4.1.8}
Let $G$ be an algebraic group acting (algebraically) on a smooth
algebraic variety $M$. Then the corresponding orbits are smooth
semialgebraic subsets in $M$.
\end{prop}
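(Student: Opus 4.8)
The plan is to present each orbit as the image of an algebraic map, deduce semialgebraicity from Theorem \ref{th4.1.3}, and extract smoothness from the homogeneity of the orbit under $G$.

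First I would fix a point $x\in M$ and consider the orbit map $\mu_x\colon G\to M$, $g\mapsto g\cdot x$; it is an algebraic morphism because the action $G\times M\to M$ is. Since $G$ is a Zariski closed (hence semialgebraic) subset of the smooth algebraic variety $G$, Theorem \ref{th4.1.3} shows at once that the orbit $G\cdot x=\mu_x(G)$ is semialgebraic in $M$.

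Next I would check that $G\cdot x$ is locally closed in $M$. Writing $Z=\overline{G\cdot x}$ for its Zariski closure, the constructibility statement underlying Theorem \ref{th4.1.3} gives a nonempty subset $U\subseteq G\cdot x$ which is open and dense in $Z$. Since $Z$ is $G$-stable and every $g\in G$ acts on $M$ as an automorphism, each translate $g\cdot U$ is open in $Z$ and contained in $G\cdot x$; moreover any $y\in G\cdot x$ lies in the same orbit as some point of $U$, so $G\cdot x=\bigcup_{g\in G}g\cdot U$ is open in $Z$, hence locally closed in $M$. In particular $G\cdot x$ inherits the structure of a reduced subvariety.

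Finally, for smoothness I would argue by homogeneity. Over $K=\R$ or $\C$ the reduced variety $G\cdot x$ has a nonempty open smooth locus; and if $y,z\in G\cdot x$ then $z=g\cdot y$ for some $g\in G$, and $g$, being an automorphism of $M$ preserving $G\cdot x$, carries smooth points of $G\cdot x$ to smooth points. Hence all points of $G\cdot x$ are smooth, and $G\cdot x$ is a smooth, locally closed, semialgebraic subset of $M$. I expect the main obstacle to be the local-closedness step: one must invoke Chevalley's theorem in the precise form ``the image of a morphism contains a dense open subset of its closure'' and verify that sweeping this open set around by $G$ really exhausts the orbit. The potential real-analytic complications when $K=\R$ are already absorbed into the semialgebraic formalism set up before the proposition, so no extra care is needed there.
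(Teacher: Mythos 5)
The paper itself offers no proof of this proposition: it is recalled as a known preliminary (from \cite{GWPL}, Chapter 1), so there is no in-text argument to compare yours against. On its own terms, your argument is the standard one and is correct over $K=\C$: the orbit is the image of the algebraic orbit map $\mu_x$, hence constructible/semialgebraic in the sense of Definition~\ref{def4.1.2}(i) by Theorem~\ref{th4.1.3}; Chevalley's theorem supplies a nonempty subset of $G\cdot x$ open in its Zariski closure, sweeping it around by $G$ gives local closedness, and smoothness follows by translating the nonempty open smooth locus of the reduced orbit by the $G$-action.

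The one genuine gap is your closing claim that the real case needs ``no extra care.'' Over $K=\R$, Theorem~\ref{th4.1.3} is Tarski--Seidenberg: it produces semialgebraic images in the sense of Definition~\ref{def4.1.2}(ii), \emph{not} Zariski-constructible ones (the image of $t\mapsto t^2$ on $\R$ is $[0,\infty)$), so ``the constructibility statement underlying Theorem~\ref{th4.1.3}'' is simply not available there, and orbits need not be Zariski locally closed: for $\R^*$ acting on $\R$ by $t\cdot v=t^2v$, the orbit $(0,\infty)$ is Zariski dense in $\R$ but not Zariski open. The correct substitutes are semialgebraic: the dimension inequality $\dim(\overline{S}\setminus S)<\dim S$ for a semialgebraic set $S$ (Euclidean closure) plays the role of Chevalley and, after sweeping by $G$, gives that the orbit is open in its Euclidean closure; and smoothness is better extracted from the fact that $d\mu_x$ has constant rank (by homogeneity) together with the rank theorem, rather than from the smooth locus of a reduced variety, which is a Zariski-theoretic notion. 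With those two substitutions your proof covers both cases; as written it proves only the complex case, which is admittedly the only one the paper uses.
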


Let $m:G\times M\to M$ be a smooth action. In order to decide
whether two elements $x_0,\,x_1\in M$ are $G$-transversal, we try to find a path (a homotopy) $P=\{x_t; t\in [0,1]\}$
such that $P$ is entirely contained in a $G$-orbit. It turns out
this naive approach works quite well and the next result gives
effective necessary  and sufficient conditions for a connected
submanifold (in our case the path $P$) to be contained in an orbit.

\begin{lemma} (\cite{Ma})\label{lem4.3.1}
Let $m:G\times M\to M$ be a smooth action and $P\subset M$ a
connected smooth submanifold. Then $P$ is contained in a single
$G$-orbit if and only if the following conditions are
fulfilled:\\
(a) $T_x(G.x)\supset T_xP$, for any $x \in P$.\\
(b) $\dim T_x(G.x)$ is constant for $x\in P$.
\end{lemma}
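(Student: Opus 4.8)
The \emph{only if} direction is immediate: if $P\subset G.x_0$, then since the orbit $G.x_0$ is an (initial) submanifold of $M$ we get $T_xP\subset T_x(G.x_0)=T_x(G.x)$ for every $x\in P$, which is (a), while $\dim T_x(G.x)=\dim(G.x)$ is constant along a single orbit, the stabilizers at different points being conjugate, which is (b). For the \emph{if} direction the plan is to reduce to the local statement that \emph{every $x_0\in P$ has a neighbourhood in $P$ contained in $G.x_0$}, and then finish by connectedness. Indeed, granting the local statement, fix a base point $x_\ast\in P$ and set $A=P\cap G.x_\ast$; then $A$ is open in $P$ (for $x_0\in A$ one has $G.x_0=G.x_\ast$, so the local neighbourhood of $x_0$ lies in $A$), and $P\setminus A$ is open too, because a $P$-neighbourhood of a point $x_0\notin A$ that lies in $G.x_0$ cannot meet $A$ without forcing $G.x_0=G.x_\ast$; since $P$ is connected and $x_\ast\in A$, this gives $A=P$, i.e.\ $P\subset G.x_\ast$.

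To prove the local statement I would apply the constant rank theorem to the action map $H\colon G\times P\to M$, $H(g,x)=g\cdot x$, near the point $(e,x_0)$, where $e\in G$ is the identity. Put $n=\dim T_{x_0}(G.x_0)$. At an arbitrary point $(g,x)\in G\times P$ the image of $dH_{(g,x)}$ is $(dL_g)_x\bigl(T_x(G.x)+T_xP\bigr)$, where $L_g$ denotes the diffeomorphism $y\mapsto g\cdot y$ of $M$; by (a) this equals $(dL_g)_x\bigl(T_x(G.x)\bigr)=T_{g\cdot x}(G.x)$, a space of dimension $\dim(G.x)=n$ by (b). Hence $\rank dH\le n$ throughout $G\times P$. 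On the other hand $dH_{(e,x_0)}$ has image $T_{x_0}(G.x_0)+T_{x_0}P=T_{x_0}(G.x_0)$, of dimension exactly $n$. Since the rank of a smooth map is lower semicontinuous, $H$ has constant rank $n$ on a neighbourhood $\Omega$ of $(e,x_0)$ in $G\times P$.

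By the rank theorem, after shrinking $\Omega$ the image $Z:=H(\Omega)$ is an embedded $n$-dimensional submanifold of $M$ with $x_0\in Z$ and $T_{x_0}Z=T_{x_0}(G.x_0)$. For a small enough neighbourhood $P'$ of $x_0$ in $P$ we have $\{e\}\times P'\subset\Omega$, so $P'=H(\{e\}\times P')\subset Z$; and for a small enough neighbourhood $G'$ of $e$ in $G$ with $G'\times\{x_0\}\subset\Omega$, the orbit map $\mu_{x_0}\colon g\mapsto g\cdot x_0$ carries $G'$ into $Z$, and $d(\mu_{x_0})_e$ surjects onto $T_{x_0}Z$, so $\mu_{x_0}\colon G'\to Z$ is a submersion near $e$ and its image contains a neighbourhood $O$ of $x_0$ that is open in $Z$ and contained in $G.x_0$. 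Since $Z$ is embedded, $P'\cap O$ is open in $P'$ and contains $x_0$, so a neighbourhood of $x_0$ in $P$ lies in $O\subset G.x_0$, which is the local statement.

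The crux is the constant-rank step in the second paragraph: this is exactly where both hypotheses are used — (a) to prevent $T_xP$ from enlarging the tangent space of the orbit through $x$, and (b) to keep that orbit from changing dimension as $x$ varies in $P$ — and it requires care in computing the differential of the action map and in exploiting the equivariance $(dL_g)_x\bigl(T_x(G.x)\bigr)=T_{g\cdot x}(G.x)$. Once constant rank is in hand, everything else is a routine application of the rank theorem together with the connectedness argument of the first paragraph.
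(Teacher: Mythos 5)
The paper does not prove this lemma; it is quoted from Mather \cite{Ma} and used as a black box, so there is no in-paper proof to compare against. Your argument is correct and is essentially the standard proof of Mather's Lemma: the only-if direction via conjugacy of stabilizers and the initial-submanifold property of orbits, and the if direction by showing the action map $G\times P\to M$ has locally constant rank (hypothesis (a) absorbing $T_xP$ into the orbit direction and (b) fixing the dimension, via the equivariance identity $(dL_g)_x(T_x(G.x))=T_{g\cdot x}(G.x)$), then applying the rank theorem to produce an embedded $n$-dimensional submanifold containing both a $P$-neighbourhood of $x_0$ and a relatively open piece of $G.x_0$, and finishing with the open--closed connectedness argument.
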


\section{Main Theorem}

For isolated hypersurface singularities, the following result was
obtained by Mather and Yau, see \cite{MY}, 1982.

\begin{thm}\label{thMY}
Let $f,g:(\C^n,0) \to (\C,0)$ be two isolated hypersurface
singularities having isomorphic Tjurina algebras $T(f) \simeq T(g)$.
Then $f\stackrel{\ik}{\sim}g$, where $\stackrel{\ik}{\sim}$ denotes
the contact  equivalence.
\end{thm}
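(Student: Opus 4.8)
The plan is to exhibit $f$ and $g$ as two points of a single connected submanifold lying inside one orbit of the contact group, and to finish by Mather's Lemma \ref{lem4.3.1}. Write $I(h)=(h)+J_{h}\subset\OO_{n}=\C\{x_{1},\dots,x_{n}\}$ for the Tjurina ideal, so $T(h)=\OO_{n}/I(h)$. First I would carry out the usual reduction: since $f$ and $g$ have isolated singularities, $I(f)$ and $I(g)$ are $\mathfrak{m}$-primary and $T(f),T(g)$ are finite dimensional, and an isomorphism $T(f)\cong T(g)$ lifts to an analytic automorphism $\Phi$ of $(\C^{n},0)$ with $\Phi^{*}I(g)=I(f)$, using the formal smoothness of $\OO_{n}$ and the identity $I(h\circ\Phi)=\Phi^{*}I(h)$. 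Replacing $g$ by $g\circ\Phi$, which is contact equivalent to $g$, we may thus assume $I(f)=I(g)=:I$, an $\mathfrak{m}$-primary ideal, and we set $\tau:=\dim_{\C}\OO_{n}/I<\infty$.

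Next I would introduce the path $f_{t}=(1-t)f+tg$, $t\in\C$. For every $t$ one has $f_{t}\in I$ and $\partial_{i}f_{t}=(1-t)\partial_{i}f+t\,\partial_{i}g\in J_{f}+J_{g}\subset I$, so $I(f_{t})\subset I$ and $\dim_{\C}\OO_{n}/I(f_{t})\ge\tau$. Put
\[
S=\{\,t\in\C:\ I(f_{t})=I\,\}=\{\,t:\ \dim_{\C}\OO_{n}/I(f_{t})=\tau\,\}.
\]
Since $t\mapsto\dim_{\C}\OO_{n}/(f_{t},\partial_{1}f_{t},\dots,\partial_{n}f_{t})$ is upper semicontinuous, $S$ is Zariski open in $\A^{1}$; it contains $0$ and $1$ because $I(f)=I(g)=I$, hence $S$ is a nonempty Zariski open subset of $\A^{1}$ and so is connected in the complex topology. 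As $I$ is $\mathfrak{m}$-primary, $\mathfrak{m}^{k}\subset I$ for some $k$, so $f$ and all $f_{t}$ with $t\in S$ are finitely $\ik$-determined; I would fix a large $k$ and pass to the finite dimensional jet space $J^{k}$ of germs $(\C^{n},0)\to(\C,0)$, on which the contact group acts algebraically through a finite dimensional algebraic group $G$.

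Then I would apply Mather's Lemma \ref{lem4.3.1} with $M=J^{k}$ and the connected smooth submanifold $P=\{\,j^{k}f_{t}:t\in S\,\}$, which passes through $j^{k}f$ and $j^{k}g$. The codimension in $J^{k}$ of the $G$-orbit of $j^{k}f_{t}$ is the Tjurina number $\dim_{\C}\OO_{n}/I(f_{t})$, constant and equal to $\tau$ for $t\in S$; this is condition (b). Condition (a) asks that $T_{f_{t}}P=\C\langle g-f\rangle$ be contained in $T_{f_{t}}(G.f_{t})$, i.e.\ that $g-f=\frac{d}{dt}f_{t}$ lie in the $k$-jet of the tangent space to the $\ik$-orbit of $f_{t}$; this is where $g-f\in I=I(f_{t})$ is used. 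Mather's Lemma then yields that $P$ lies in one $G$-orbit, so $j^{k}f$ and $j^{k}g$ are equivalent jets and, by $k$-determinacy, $f\stackrel{\ik}{\sim}g$.

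The routine but genuinely technical obstacle is the passage from the infinite dimensional contact action to a finite dimensional jet space, together with the precise description there of the tangent space to a $\ik$-orbit — the distinction between $\mathfrak{m}J_{h}+(h)$ and $J_{h}+(h)$ and the accompanying determinacy estimate — since verifying condition (a) rests on it. The conceptual crux, which I expect to be the decisive step, is the observation that the good-parameter locus $S$ is Zariski open and connected: this is exactly what makes conditions (a) and (b) hold simultaneously, whereas the naive full segment $t\in[0,1]$ may cross the locus where the Tjurina number jumps.
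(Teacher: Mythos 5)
The paper offers no proof of Theorem \ref{thMY}: it is imported from \cite{MY} as motivation, and the argument you sketch is really the paper's proof of the homogeneous analogue, Theorem \ref{th8.1} (the path $f_t=(1-t)f+tg$, a Zariski-open set of good parameters detected by a rank condition, then Mather's Lemma), transplanted to the local analytic, contact-equivalence setting. That transplant is indeed the architecture of the Mather--Yau argument, and your reduction to $I(f)=I(g)$, the openness and connectedness of $S$, and the passage to jets via finite determinacy are all sound (modulo the usual splitting-off of the nondegenerate quadratic part when $I(g)\not\subset\mathfrak{m}^{2}$).

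However, the step you set aside as a ``routine but genuinely technical obstacle'' is the actual crux, and as written it is a gap. For the contact group as defined (source diffeomorphisms fixing the origin), the tangent space to the orbit of $f_{t}$ in the jet space is the image of $(f_{t})+\mathfrak{m}J_{f_{t}}$, not of $I(f_{t})=(f_{t})+J_{f_{t}}$. Your verification of condition (a) only yields $g-f\in I$, which does not by itself give $g-f\in(f_{t})+\mathfrak{m}J_{f_{t}}$: already for $n=1$ and $h=x^{3}$ one has $(h)+J_{h}=(x^{2})$ while $(h)+\mathfrak{m}J_{h}=(x^{3})$, so the two candidate tangent spaces genuinely differ. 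Likewise condition (b) requires constancy of $\dim\bigl((f_{t})+\mathfrak{m}J_{f_{t}}\bigr)$ along $S$, which does not follow from constancy of $I(f_{t})$ alone; your identification of the orbit codimension with the Tjurina number is the extended codimension, not the orbit codimension for this group. The standard repair is to apply Mather's Lemma to the larger algebraic group generated by the contact group and translations of the source, whose orbit tangent space at $f_{t}$ is exactly the image of $(f_{t})+J_{f_{t}}=I$; both conditions then follow immediately from $I(f_{t})=I$ for $t\in S$. One concludes $g=u\cdot(f\circ\phi)$ with $\phi(0)$ a priori arbitrary, and then observes that for isolated singularities $\phi$ must carry the unique point of $V(I(g))=\{0\}$ to the unique point of $V(I(f))=\{0\}$, hence fixes the origin, so the equivalence is an honest $\ik$-equivalence. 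Without this device, or a separate algebraic lemma placing $g-f$ in $(f_{t})+\mathfrak{m}J_{f_{t}}$, the argument does not close.
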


For arbitrary (i.e. not necessary with isolated singularities)
homogeneous polynomials we establish now the following result.

\begin{thm}\label{th8.1}
Let $f,g\in\C[x_1,\ldots,x_n]_d=H^d(n,1;\C)=H^d$ be two homogeneous
polynomials of degree $d$ such that $\jj_f=\jj_g$. Then
$f\stackrel{\ir}{\sim}g$, where $\stackrel{\ir}{\sim}$ denotes the
right equivalence.
\end{thm}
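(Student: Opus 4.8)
The plan is to deduce the theorem from Mather's Lemma \ref{lem4.3.1}, along the lines of the proof of the Mather--Yau theorem, but using Euler's identity in place of finite determinacy. First I would dispose of the trivial cases ($f=0$ or $g=0$ forces both to vanish, and $d=1$ is immediate) and reduce the relevant group. If $g=f\circ\phi$ for a local analytic isomorphism $\phi$ of $(\C^n,0)$ with linear part $\phi_1=d_0\phi$, then comparing degree-$d$ homogeneous components of $f\circ\phi$ (every term has order $\geq d$, and only $\phi_1$ contributes in order exactly $d$) shows $g=f\circ\phi_1$ with $\phi_1\in GL_n(\C)$. Thus right-equivalence of homogeneous polynomials of the same degree coincides with linear equivalence, and it suffices to prove that $f$ and $g$ lie in one orbit of the algebraic action of $G:=GL_n(\C)$ on $M:=H^d$ given by $(\sigma\cdot h)(x)=h(\sigma^{-1}x)$; by Proposition \ref{prop4.1.8} its orbits are smooth semialgebraic.

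Next I would reformulate the hypothesis. For $h\in H^d$ set $W_h:=\mathrm{span}_\C\{\partial_1 h,\ldots,\partial_n h\}\subseteq\C[x_1,\ldots,x_n]_{d-1}$. Since the generators of $\jj_h$ are homogeneous of degree $d-1$, the ideal $\jj_h$ is determined by $W_h$ and recovers it as its degree-$(d-1)$ piece, so $\jj_f=\jj_g$ is equivalent to $W_f=W_g=:W$. I would then introduce
\[
\Sigma_0:=\{h\in H^d:\ W_h\subseteq W\}\ \supseteq\ \Sigma:=\{h\in H^d:\ W_h=W\}.
\]
The containment $W_h\subseteq W$ is a system of linear conditions on $h$, so $\Sigma_0$ is a linear subspace of $H^d$; and $\Sigma$ is the complement in $\Sigma_0$ of the set where the (linear-in-$h$) matrix of the map $\C^n\to W$, $e_i\mapsto\partial_i h$, has rank $<\dim W$, which is a proper Zariski-closed subset of $\Sigma_0$ (proper because $f\in\Sigma$). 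Hence $\Sigma$ is a nonempty Zariski-open subset of an affine space, therefore a connected smooth submanifold of $H^d$, and it contains both $f$ and $g$.

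With $P:=\Sigma$ in hand, I would verify the two conditions of Mather's Lemma \ref{lem4.3.1}. Differentiating the orbit map yields $T_h(G\cdot h)=\{\sum_{i=1}^n\ell_i\,\partial_i h:\ \ell_i\in\C[x_1,\ldots,x_n]_1\}=\C[x_1,\ldots,x_n]_1\cdot W_h=(\jj_h)_d$, the degree-$d$ part of the Jacobian ideal. For $h\in\Sigma$ this equals $\C[x_1,\ldots,x_n]_1\cdot W$, a subspace of $H^d$ that does not depend on $h$, which gives condition (b). For condition (a), $\Sigma$ is open in the linear space $\Sigma_0$, so $T_h\Sigma=\Sigma_0$, and I must check $\Sigma_0\subseteq\C[x_1,\ldots,x_n]_1\cdot W$: if $k\in\Sigma_0$ then each $\partial_i k$ lies in $W$, and Euler's identity $d\,k=\sum_i x_i\,\partial_i k$ exhibits $k$ as an element of $\C[x_1,\ldots,x_n]_1\cdot W$. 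Mather's Lemma then shows that $\Sigma$ lies in a single $G$-orbit, so $g=f\circ\sigma$ for some $\sigma\in GL_n(\C)$, i.e. $f\stackrel{\ir}{\sim}g$.

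The step I expect to be the real obstacle is securing condition (b), that is, choosing $P$ correctly. The naive candidate, the segment $f_t=(1-t)f+tg$, does satisfy (a) by the same Euler computation and always has $W_{f_t}\subseteq W$, but $\dim W_{f_t}$ can drop at finitely many values of $t$, so the orbit dimension need not be constant along it and Mather's Lemma does not apply. Enlarging the segment to the whole Zariski-open stratum $\Sigma=\{h:\jj_h=\jj_f\}$ inside the linear space $\Sigma_0$ is exactly what makes the orbit dimension automatically constant; once $P=\Sigma$ is fixed, the tangent-space identification $T_h(G\cdot h)=(\jj_h)_d$ and the application of Euler's identity are routine.
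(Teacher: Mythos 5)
Your argument is correct, and its skeleton is the paper's: act by $Gl(n,\C)$ on $H^d$, identify $T_h(Gl(n,\C)\cdot h)$ with $\mathrm{span}_\C\{x_j\,\partial h/\partial x_i\}=\jj_h\cap H^d$, and feed a suitable connected submanifold through both hypotheses of Mather's Lemma \ref{lem4.3.1}, with condition (a) supplied by Euler's identity. Where you genuinely diverge is in the choice of that submanifold, which is indeed the crux. The paper takes the segment $f_t=(1-t)f+tg$, argues via a rank/determinant computation that $\dim\,\mathrm{span}_\C\{x_j\,\partial f_t/\partial x_i\}$ attains its maximal value $\dim(\jj_f\cap H^d)$ for all but finitely many $t\in\C$, and then reparametrizes along a path in the $t$-plane avoiding those exceptional values. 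You instead take $P=\Sigma=\{h:\jj_h=\jj_f\}$, a nonempty Zariski-open (hence connected) subset of the linear space $\Sigma_0=\{h:\jj_h\subseteq\jj_f\}$. This buys real simplifications: condition (b) becomes automatic because for $h\in\Sigma$ the orbit tangent spaces are literally one and the same subspace $\C[x_1,\ldots,x_n]_1\cdot W$, not merely of constant dimension; you obtain the slightly stronger conclusion that the entire stratum $\{h:\jj_h=\jj_f\}$ is a single orbit; and you bypass the rough spots in the paper's path construction (the claim that a matrix of rank at most $m$ ``must have $n^2-m$ proportional rows,'' and the map $\alpha:\C\to\C\setminus\{t_1,\ldots,t_p\}$ with $\alpha(0)=0$, $\alpha(1)=1$, which by Picard cannot be holomorphic and nonconstant once $p\geq 2$, so must be read as a real path whose image then needs a word to be a smooth connected submanifold). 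The paper's segment has the virtue of being completely explicit, and its finitely-many-bad-values analysis is essentially your observation that $\Sigma$ is the complement of a proper determinantal hypersurface, restricted to a line. Your preliminary reduction of right equivalence to linear equivalence is not needed in the direction you actually use (you end up proving the stronger linear equivalence), but it is correct and harmless.
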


\begin{proof}
To prove this claim choose an appropriate submanifold of
$H^d(n,1;\C)$ containing $f$ and $g$ and then apply Mather's lemma
to get the result.\\ Here $f,g\in H^d(n,1;\C)$ such that
$\jj_f=\jj_g$. Set $f_t=(1-t)f+tg\in H^d(n,1;\C)$. Consider the
$\ir$-equivalence action on $H^d(n,1;\C)$ under the group
$Gl(n,\C)$. By eq. (4.16) \cite{D1}, p35, we have
\begin{equation}\label{eq8.1}
T_{f_t}(Gl(n,\C).f_t)=\C<x_j\frac{\partial f_t}{\partial
x_i};\,\,i,j=1,\ldots,n>
\end{equation}
Now, note that the R.H.S of eq. \eqref{eq8.1} satisfies the relation
\[
\C<x_j\frac{\partial f_t}{\partial
x_i};\,\,i,j=1,\ldots,n>\subset\jj_{f_t}\cap H^d
\]
But $\jj_{f_t}\cap H^d\subset \jj_f\cap H^d$ since
\[\frac{\partial f_t}{\partial x_i}=(1-t)\frac{\partial f}{\partial
x_i}+t\frac{\partial g}{\partial
x_i}\in(1-t)\jj_f+t\jj_g=\jj_f\,\,(\because \jj_f=\jj_g)\] So, we
have the inclusion of finite dimensional $\C$-vector spaces
\begin{equation}\label{eq8.2}
T_{f_t}(Gl(n,\C).f_t)=\C<x_j\frac{\partial f_t}{\partial
x_i};\,\,i,j=1,\ldots,n>\subset \jj_f\cap H^d
\end{equation}
with equality for $t=0$ and $t=1$.\\Let's  show that we have
equality for all $t\in[0,1]$ except finitely many values.\\
Clearly the dimension of the space $\jj_f\cap H^d$ is at most $n^2$.
Let's fix $\{e_1,\ldots,e_m\}$ a basis of $\jj_f\cap H^d$, where
$m\leq n^2$. Consider the $n^2$ polynomials
\[\alpha_{ij}(t)=x_j\frac{\partial f_t}{\partial
x_i}=x_j[(1-t)\frac{\partial f}{\partial x_i}+t\frac{\partial
g}{\partial x_i}]\] corresponds to the space \eqref{eq8.1}. We can
express each $\alpha_{ij}(t)$, $i,j=1,\ldots,n$ in terms of above
mentioned fixed basis as
\begin{equation}\label{eq8.3}
\alpha_{ij}(t)=\phi_{ij}^1(t)e_1+\ldots+\phi_{ij}^m(t)e_m,\,\,\forall\,\,
i,j=1,\ldots,n
\end{equation}
where each $\phi_{ij}^k(t)$ is linear in $t$. Consider the matrix of
transformation corresponding to the eqs. \eqref{eq8.3}
\[
(\phi_{ij}^m(t))_{n^2\times m}= \left(
  \begin{array}{cccc}
    \phi_{11}^1(t) & \phi_{11}^2(t)& \ldots & \phi_{11}^m(t) \\
    \vdots & \vdots & \ddots & \vdots \\
    \phi_{1n}^1(t) & \phi_{1n}^2(t) & \ldots & \phi_{1n}^m(t) \\
    \phi_{21}^1(t) & \phi_{21}^2(t) & \ldots & \phi_{21}^m(t) \\
    \vdots & \vdots & \ddots & \vdots \\
    \phi_{nn}^1(t) & \phi_{nn}^2(t) & \ldots & \phi_{nn}^m(t) \\
  \end{array}
\right)
\]
having rank at most $m$. Note that the equality $<x_j\frac{\partial
f_t}{\partial x_i}>=\jj_f\cap H^d$ holds for those values of $t$ in
$\C$ for which the rank of above matrix is precisely $m$. As we know
that the rank of the matrix of transformation is at most $m$,
therefore there must be $n^2-m$ proportional rows. So, we have the
$m\times m$-sub matrix whose determinant is a polynomial of degree
$m$ in $t$ and by the fundamental theorem of algebra it has at most
$m$ roots in $\C$ for which rank of the matrix of transformation
will be less than $m$. Therefore, the above-mentioned equality does
not hold for a finitely many values, say $t_1,\ldots,t_p$ where
$1\leq p\leq m$.\\
It follows that the dimension of the space \eqref{eq8.1} is constant
for all $t\in \C$ except finitely many values $\{t_1,\ldots,t_p\}$.\\
For an arbitrary smooth path
\[\alpha:\C\longrightarrow\C\backslash \{t_1,\ldots,t_p\}\]
with $\alpha(0)=0$ and $\alpha(1)=1$, we have the connected smooth
submanifold
\[P=\{f_t=(1-\alpha(t))f(x)+\alpha(t)g(x):\,t\in\C\}\]
of $H^d$. By the above, it follows $\dim T_{f_t}((Gl(n,\C).f_t))$ is
constant for $f_t\in P$.\\
Now, to apply Mather's lemma, we need to show that the tangent space
to the submanifold $P$ is contained in that to the orbit
$Gl(n,\C).f_t$ for any $f_t\in P$. One clearly has
\[T_{f_t}P=\{\dot{f_t}=-\dot{\alpha}(t)f(x)+\dot{\alpha}(t)g(x):\,\forall\,t\in\C\}\]
Therefore, by Euler formula 7.6 \cite{D1}, p 101, we have\[T_{f_t}P\subset
T_{f_t}(Gl(n,\C).f_t)\] By Mather's lemma the submanifold $P$ is
contained in a single orbit. It implies that
$f\stackrel{\ir}{\sim}g$, as required.
\end{proof}

\begin{cor}\label{cor8.2}
Let $f,\,g\in H^d(n,1;\C)$. If $M(f)\simeq M(g)$ (isomorphism of
graded $\C$-algebra) then $f\stackrel{\ir}{\sim}g$.
\end{cor}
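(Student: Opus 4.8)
The plan is to deduce Corollary \ref{cor8.2} from Theorem \ref{th8.1} by upgrading the abstract isomorphism of graded Milnor algebras to a genuine linear change of coordinates, after which Theorem \ref{th8.1} and transitivity of $\stackrel{\ir}{\sim}$ finish the job. Write $P=\C[x_1,\ldots,x_n]$ with its standard grading and $M(f)=P/\jj_f$. Since every $\partial f/\partial x_i$ is homogeneous of degree $d-1$, the Jacobian ideal $\jj_f$ is concentrated in degrees $\ge d-1$; hence $M(f)$ is a graded quotient of $P$, it is generated by $M(f)_1$ as a $\C$-algebra, and $M(f)_k=P_k$ for all $k<d-1$. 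The same holds for $g$.

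Assume first $d\ge 3$, which I regard as the main case. Then $M(f)_1=M(g)_1=P_1$, so a graded algebra isomorphism $\phi\colon M(f)\to M(g)$ restricts to a linear isomorphism $\phi_1\colon P_1\to P_1$. Because $P$ is the free commutative $\C$-algebra on $P_1$, the map $\phi_1$ extends uniquely to a graded algebra automorphism $\Phi\colon P\to P$, and $\Phi=\psi^{*}$ for a unique $\psi\in Gl(n,\C)$, where $\psi^{*}(h)=h\circ\psi$. Now $M(f)$ and $M(g)$ are generated in degree $1$, so the two $\C$-algebra homomorphisms $\phi\circ\pi_f$ and $\pi_g\circ\Phi$ from $P$ to $M(g)$ agree on $P_1$ and therefore coincide; comparing their kernels gives $\Phi(\jj_f)=\jj_g$. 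Since for a linear change of variables the chain rule gives $\jj_{f\circ\psi}=\psi^{*}(\jj_f)$, this reads $\jj_{f\circ\psi}=\jj_g$. Applying Theorem \ref{th8.1} to the pair $f\circ\psi,\ g$ yields $f\circ\psi\stackrel{\ir}{\sim}g$, and since a linear coordinate change is itself an $\ir$-equivalence one has $f\circ\psi\stackrel{\ir}{\sim}f$; transitivity then gives $f\stackrel{\ir}{\sim}g$.

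It remains to dispose of $d=1$ and $d=2$. For $d=1$ the Milnor algebra is $\C$ when $f\ne 0$ (and all of $P$ when $f=0$), so $M(f)\simeq M(g)$ forces $f$ and $g$ to be simultaneously zero or simultaneously nonzero linear forms, and in either case $f\stackrel{\ir}{\sim}g$. For $d=2$ one cannot lift on degree-one parts, so I would argue directly: over $\C$ a quadratic form of rank $r$ is $Gl(n,\C)$-equivalent to $x_1^2+\cdots+x_r^2$, whose Milnor algebra is the polynomial ring on the remaining $n-r$ variables; hence $M(f)\simeq M(g)$ forces $\rank f=\rank g$, so $f$ and $g$ lie in the same $Gl(n,\C)$-orbit and a fortiori $f\stackrel{\ir}{\sim}g$.

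The heart of the proof — and the only delicate point — is showing that the abstract graded isomorphism is induced by an element of $Gl(n,\C)$; once that is in place everything reduces to an application of Theorem \ref{th8.1}. The clean lifting argument relies on $M(f)_1=P_1$, which holds only for $d\ge 3$, so the principal nuisance is the separate (but entirely elementary) bookkeeping in the degenerate degrees $1$ and $2$.
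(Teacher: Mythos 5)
Your proposal is correct and follows essentially the same route as the paper: lift the graded isomorphism on degree-one parts (using $M(f)_1=P_1$ for $d\ge 3$) to a linear change of coordinates $\psi$ with $\jj_{f\circ\psi}=\jj_g$, then apply Theorem \ref{th8.1} and transitivity. You even go a bit further than the paper by actually carrying out the $d=1,2$ cases it dismisses as ``easy''; the only quibbles there are that for a nonzero linear $f$ the Milnor algebra is $0$ rather than $\C$ (the partials are nonzero constants, so $\jj_f$ is the unit ideal), and that the coincidence of $\phi\circ\pi_f$ and $\pi_g\circ\Phi$ follows from $P$ (not $M(f)$) being generated in degree one -- neither affects the argument.
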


\begin{proof}
We show firstly that an isomorphism of graded $\C$-algebras
\[\varphi:M(g)=\frac{\C[x_1,\ldots,x_n]}{\jj_g}\stackrel{\simeq}{\longrightarrow}
M(f)=\frac{\C[x_1,\ldots,x_n]}{\jj_f}\] is induced by a linear
isomorphism $u:\C^n\longrightarrow\C^n$ such that
$u^*(\jj_g)=\jj_f$.\\Consider the following commutative diagram.
$$\xymatrix{
0 \ar[d] & 0 \ar[d] \\
\jj_g  \ar@{.>}[r]^{u^*} \ar[d]^i & \jj_f \ar[d]^j \\
\C[x_1,\ldots,x_n]  \ar@{.>}[r]^{u^*} \ar[d]^p & \C[x_1,\ldots,x_n]
\ar[d]^q
\\
M(g) \ar[r]^{\varphi}_{\simeq} \ar[d] & M(f) \ar[d] \\
0 & 0 }
$$

We note that the isomorphism $\varphi$ is a degree preserving map
and each of the Jacobian ideals $\jj_f$ and $\jj_g$ is generated by
the homogeneous polynomials of degree $d-1$. The cases $d=1$ and
$d=2$ are special, and we can treat them easily. Assume from now on
that $d\geq 3$. Define the morphism
$u^*:\C[x_1,\ldots,x_n]\rightarrow \C[x_1,\ldots,x_n]$ by
\begin{equation}\label{eq8.4}
u^*(x_i)=Li(x_1,\ldots,x_n)=a_{i1}x_1+\ldots+a_{in}x_n\,,\,\,a_{i1},\ldots,a_{in}\in\C
\end{equation}
which is well defined by commutativity of diagram below.
$$\xymatrix{
x_i  \ar@{|.>}[r]^{u^*} \ar@{|->}[d]^p &  Li\ar@{|->}[d]^q \\
\widehat{x_i} \ar@{|->}[r]^{\varphi}_{\simeq} & \widehat{L_i}}$$

Let's prove that $u^*$ is an isomorphism. Consider the following
commutative diagram at the level of degree $d=1$.
$$\xymatrix{
\C[x_1,\ldots,x_n]_1  \ar@{.>}[r]^{u_1^*} \ar@{=}[d]^p &
\C[x_1,\ldots,x_n]_1 \ar@{=}[d]^q \\
(M(g))_1 \ar[r]^{\varphi_1}_{\simeq} & (M(f))_1}
$$

Since here the Jacobian ideals $\jj_f$ and $\jj_g$ are generated by
polynomials of degree $\geq 2$, therefore we
have\[(M(g))_1=(M(f))_1=\C[x_1,\ldots,x_n]_1\] It implies that
$\varphi_1$ and $u_1^*$ are coincident. As $\varphi$ is a given
graded $\C$-algebra isomorphism, it follows that $u_1^*$ is also an
isomorphism. Hence $u^*$ is an isomorphism.
\\Next we show that $u^*(\jj_g)=\jj_f$.\\ For every
$G\in\jj_g$, we have $u^*(G)\in\jj_f$ by commutative diagram below.
$$\xymatrix{
G  \ar@{|->}[r]^{u^*} \ar@{|->}[d]^p &  F=u^*(G) \ar@{|->}[d]^q \\
\widehat{0} \ar@{|->}[r]^{\varphi} & \widehat{F}=\widehat{0}}
$$

It implies that $u^*(\jj_g)\subset\jj_f$. As $u^*$ is an
isomorphism, therefore it is invertible and by repeating the above
argument for its inverse, we have $u^*(\jj_g)\supset\jj_f$.\\Thus,
$u^*$ is an isomorphism with $u^*(\jj_g)=\jj_f$.\\
By eq. \eqref{eq8.4}, the map $u:\C^n\to\C^n$ can be defined by
$$u(z_1,\ldots,z_n)=(L_1(z_1,\ldots,z_n),\ldots,L_n(z_1,\ldots,z_n))$$
where $L_i(z_1,\ldots,z_n)=a_{i1}z_1+\ldots+a_{in}z_n$.\\Note that
that $u$ is a linear isomorphism by Prop. 3.16 \cite{D1}, p.23.\\
In this way, we have shown that the isomorphism $\varphi$ is induced
by a linear isomorphism $u:\C^n\to\C^n$ such that $u^*(\jj_g)=\jj_f$.\\
Here $u^*(\jj_g)=<g_1\circ u,\ldots,g_n\circ u>=\jj_{g\circ u}$,
where $g_j=\frac{\partial g}{\partial x_j}$.\\
Therefore, $\jj_{g\circ u}=\jj_f\Rightarrow g\circ
u\stackrel{\ir}{\sim}f$. Now, $g\circ u\stackrel{\ir}{\sim}g$
follows that $g\stackrel{\ir}{\sim}f$.
\end{proof}

\begin{rk}\label{rk8.3}
The converse implication, namely
\[f\stackrel{\ir}{\sim}g\Rightarrow M(f)\simeq M(g)\]
always holds(even for analytic germs $f,\,g$ defining IHS), see
\cite{D1}, p90.
\end{rk}

\end{document}